\theoremstyle{plain}
\newtheorem{Def}{Definition}
\newtheorem{Teo}{Theorem}
\newtheorem{Lema}[Teo]{Lemma}
\newtheorem{Prop}[Teo]{Proposition}
\theoremstyle{remark}
\newtheorem{Rem}{Remark}
\newcommand{\etal}{\textsl{et al.}}
\theoremstyle{remark}
\newtheorem{Example}{Example}
\begin{document}
\pagestyle{myheadings} 
\title{Unicyclic graphs with equal Laplacian energy}

\author[E. Fritscher]{Eliseu Fritscher}\email{\tt  eliseu.fritscher@ufrgs.br}

\author[C. Hoppen]{Carlos Hoppen}\email{\tt choppen@ufrgs.br}

\author[V. Trevisan]{Vilmar Trevisan} \email{\tt trevisan@mat.ufrgs.br}

\address{Instituto de Matem\'atica, UFRGS -- Avenida Bento Gon\c{c}alves, 9500, 91501--970 Porto Alegre, RS, Brazil}
\thanks{C. Hoppen acknowledges the support of FAPERGS (Proc.~11/1436-1)  and CNPq (Proc.~486108/2012-0 and~304510/2012-2). V. Trevisan was partially supported by CNPq (Proc. 309531/2009-8 and 481551/2012-3) and FAPERGS (Proc. 11/1619-2)}

\thanks{A paper with the same title has been accepted by Linear and Multilinear Algebra. Although the results are basically the same, the current manuscript contains a slightly modified version of Theorem~\ref{teodifespect}.}

\begin{abstract}
We introduce a new operation on a class of graphs with the property that the Laplacian eigenvalues of the input and output graphs are related. Based on this operation, we obtain a family of $\Theta(\sqrt{n})$ noncospectral unicyclic graphs on $n$ vertices with the same Laplacian energy.
\end{abstract}

\maketitle

\section{Introduction and main results}

In this paper, we deal with simple undirected graphs $G$ with vertex set $V=\{v_1,\ldots,v_n\}$. The \emph{Laplacian matrix} of $G$ is given by $L=D-A$, where $D$ is the diagonal matrix whose entry $(i,i)$ is equal to the degree of $v_i$ and $A$ is the adjacency matrix of $G$. The \emph{Laplacian spectrum} of $G$, denoted by $Lspect(G)$, is the (multi)set of eigenvalues of $L$, which will be written as $\mu_1 \geq \mu_2 \geq \cdots \geq \mu_n=0$. The \emph{Laplacian energy} of $G$, introduced by Gutman and Zhou~\cite{GZ06}, is given by
$$LE(G)=\sum_{i=1}^{n} |\mu_i-\overline{d}|,$$
where $\overline{d}$ is the average degree of $G$.

A natural question about the Laplacian energy concerns its power, as a spectral parameter, to discriminate graphs with the same number of vertices. In a sobering answer to this question, Stevanovi\'c \cite{Ste09} exhibited a set with $\Theta(n^2)$ threshold graphs on $n$ vertices having the same Laplacian energy. This large set of graphs with equal Laplacian energy seems to contrast with the case of trees. Stevanovi\'c  reports that, up to 20 vertices, there exists no pair of noncospectral trees with equal Laplacian energy. In fact, to the best of our knowledge, no pair of $n$-vertex noncospectral trees with the same Laplacian energy has been identified so far.

Finding a pair of $n$-vertex trees with equal Laplacian energy was the motivation of this work. Even though we have not succeeded, we did study a class of graphs that is \emph{close} to trees, namely the class of connected graphs with a single cycle, the so-called \emph{unicyclic graphs}. We asked  whether there exist $n$-vertex unicyclic graphs with equal Laplacian energy. The answer is affirmative. Indeed, we exhibit families with $\Theta(\sqrt{n})$ noncospectral unicyclic $n$-vertex graphs having the same Laplacian energy. To obtain these families, we introduce a graph operation that affects the Laplacian spectrum of a particular class of graphs in a way that can be controlled. This operation may lead to graph families that are relevant in other contexts and is interesting for its own sake.

To state our main results, we need to describe the structure of the graphs and of the operation under consideration.
\begin{Def}[Graph family $\mathcal{W}_{n,k}$]
Let $n,k$ be positive integers such that $n > 2k$. Consider a $k$-vertex graph $G^{\ast}$ whose vertices are labeled $1$ to $k$ and an $(n-2k)$-vertex graph $\breve{G}$ rooted at a vertex $u$. For any vector $y \in \{0,1\}^k$, we define an $n$-vertex graph $G=G(G^{\ast},\breve{G},y)$ by taking two disjoint copies of $G^*$ and one copy of $\breve{G}$, and by joining the root $u$ of $\breve{G}$ to the two copies of the vertex labeled $i$ in $G^*$ if and only if $y_i=1$. The graph family $\mathcal{W}_{n,k}$ comprises all graphs $G$ that can be
constructed in this way.
\end{Def}
We say that $G^{\ast}$ and $\breve{G}$ are the \emph{building blocks} of $G$, while $y$ is its \emph{adjacency vector}. Observe that some graphs $G\in\mathcal{W}_{n,k}$ may be constructed in more than one way.

Given a graph $G\in\mathcal{W}_{n,k}$, a \emph{canonical labeling} of the vertices of $G$ is given as follows. The original labeling of $G^*$ is used to label vertices in the two copies of $G^*$ in $G$ from $1$ to $k$ and from $k+1$
to $2k$, respectively. We let $v_{2k+1}=u$ and the remaining vertices of $\breve{G}$ are arbitrarily labeled $2k+2$ to $n$.

\begin{Example} Consider the labeled graph $G^*$
and the rooted graph $\breve{G}$ depicted in Figure~\ref{figexg}. If the
adjacency vector is given by $y=(1,1,0,0,0)^T\in\{0,1\}^5$, we obtain
a graph $G=G(G^{\ast},\breve{G},y)$ with 16 vertices.
\begin{figure}[h!]
       \begin{minipage}[c]{0.5 \linewidth}
           \fbox{\includegraphics[width=\linewidth]{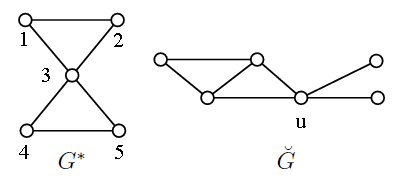}}\\
       \end{minipage}\hspace{2cm}
       \begin{minipage}[c]{0.25 \linewidth}
           \fbox{\includegraphics[width=\linewidth]{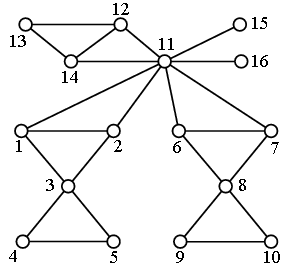}}\\
       \end{minipage}
       \caption{Graphs $G^*$, $\breve{G}$ and $G=G(G^*,\breve{G},y)$.}
       \label{figexg}
\end{figure} \end{Example}

We shall consider a specific operation that can be performed on graphs in $\mathcal{W}_{n,k}$.
\begin{Def}[Operation $\mathcal{E}_z$] Given a vector $z\in \{0,1\}^k$, the operation $\mathcal{E}_z$ is defined on a graph $G=G(G^{\ast},\breve{G},y)\in\mathcal{W}_{n,k}$ by inserting an edge between the two copies of the vertex labeled $i$ in $G^*$ if $z_i=1$. In other words,
$\mathcal{E}_z$ adds an edge between vertices $v_i$ and
$v_{k+i}$ of $G$ whenever $z_i=1$, for every $1\leq i\leq k$.
\end{Def}

We say that $z$ is the \textit{characteristic vector} of
$\mathcal{E}_z$.

\begin{Example} Consider the graph $G$ of Figure \ref{figexg}. Taking $z=(1,0,1,0,0)^T$ as
the characteristic vector, we obtain the graph
$\mathcal{E}_z(G)$ of Figure \ref{figexezg}.
\begin{figure}[h!]\begin{center}
    \fbox{\includegraphics[width=0.25\linewidth]{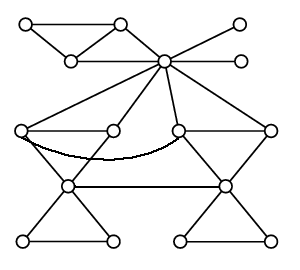}}
    \caption{Graph $\mathcal{E}_z(G)$}
    \label{figexezg}
\end{center}\end{figure}
\end{Example}

For a vector $y\in \{0,1\}^k$, we associate a
square matrix $E_y$ of order $k$ whose $i$-th column is the $i$-th
canonical vector $e_i\in\{0,1\}^k$ if $y_i=1$ and the null vector if $y_i=0$. So we can write $E_y=\sum_i y_i (e_i\cdot e_i^T)$.

The following result relates the Laplacian spectra of $G \in \mathcal{W}_{n,k}$ and $\mathcal{E}_z(G)$. Throughout the paper, the (multi)set of eigenvalues of a square matrix $A$ is denoted by $spect(A)$.
\begin{Teo}\label{teodifespect} Let $G$ be a graph in $\mathcal{W}_{n,k}$ with building blocks $G^{\ast}$ and $\breve{G}$, and adjacency vector $y$. Let $H=L(G^*)+E_y$. For $D=spect(H)$ and $F=spect(H+2E_z)$, where $z \in \{0,1\}^k$, we have
$$D \subset Lspect(G) \textsl{ and } Lspect(\mathcal{E}_z(G))=(Lspect(G) \setminus D)\cup F.$$
In particular, $G$ and $\mathcal{E}_z(G)$ have at least $n-k$ common Laplacian eigenvalues.
\end{Teo}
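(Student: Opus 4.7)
The driving idea is that both $G$ and $\mathcal{E}_z(G)$ possess a Laplacian symmetry: the involution $\sigma$ that swaps $v_i$ with $v_{k+i}$ for $1 \leq i \leq k$ and fixes every vertex of $\breve{G}$. Since $\sigma$ preserves the edge set of both graphs, it commutes with $L(G)$ and with $L(\mathcal{E}_z(G))$, and both Laplacians therefore block-decompose on the $(+1)$- and $(-1)$-eigenspaces of $\sigma$. I would make this decomposition explicit, show that the $(+1)$-block is the same matrix $M$ in both cases, and identify the $(-1)$-block as $H$ for $G$ and as $H+2E_z$ for $\mathcal{E}_z(G)$.

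Concretely, I would write both Laplacians in the $3\times 3$ block form corresponding to the partition (first copy of $G^*$, second copy of $G^*$, $\breve{G}$). In the canonical labeling, the two $G^*$-diagonal blocks of $L(G)$ are both equal to $H=L(G^*)+E_y$ (the $y_i$ contribution accounting for the extra degree coming from the edge to $u$), the $(1,2)$-block is zero, the $(1,3)$- and $(2,3)$-blocks equal $-y\,e_1^T$ (with $e_1\in\mathbb{R}^{n-2k}$ corresponding to $u$), and the $(3,3)$-block is $L(\breve{G})+2\|y\|_1\,e_1 e_1^T$. For $L(\mathcal{E}_z(G))$, only the upper $2k\times 2k$ changes: each diagonal $G^*$-block becomes $H+E_z$, and the $(1,2)$-block becomes $-E_z$. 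I would then conjugate by the orthogonal matrix
\[
\tilde P = \tfrac{1}{\sqrt{2}}\begin{pmatrix} I_k & I_k & 0 \\ I_k & -I_k & 0 \\ 0 & 0 & \sqrt{2}\,I_{n-2k}\end{pmatrix},
\]
which realizes the $\sigma$-decomposition on the first two blocks. A short block computation yields
\[
\tilde P^T L(G)\tilde P \;=\; \begin{pmatrix} H & 0 & -\sqrt{2}\,y e_1^T \\ 0 & H & 0 \\ -\sqrt{2}\,e_1 y^T & 0 & L(\breve{G})+2\|y\|_1\,e_1 e_1^T\end{pmatrix},
\]
and the analogous matrix for $\mathcal{E}_z(G)$ differs from this one only in that the middle $k\times k$ block is $H+2E_z$ rather than $H$, since the $E_z$ and $-E_z$ contributions cancel in the symmetric copy and add in the antisymmetric copy.

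After permuting rows and columns, both conjugated matrices are block-diagonal with a common block
\[
M=\begin{pmatrix} H & -\sqrt{2}\,y e_1^T \\ -\sqrt{2}\,e_1 y^T & L(\breve{G})+2\|y\|_1\,e_1 e_1^T\end{pmatrix}
\]
plus an isolated $k\times k$ block, which is $H$ for $G$ and $H+2E_z$ for $\mathcal{E}_z(G)$. Orthogonality of $\tilde P$ then gives $Lspect(G)=spect(H)\cup spect(M)$ and $Lspect(\mathcal{E}_z(G))=spect(H+2E_z)\cup spect(M)$, which immediately yield both claimed identities and the cardinality statement (the $n-k$ eigenvalues contributed by $M$ are shared). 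The main obstacle is more conceptual than computational: one must recognize that the right coordinate system is dictated by $\sigma$, so that the $(-1)$-eigenspace isolates the sole effect of $\mathcal{E}_z$ while the $(+1)$-eigenspace carries the common block $M$ that also encodes the coupling to $\breve{G}$. Once this is in place, the proof reduces to careful bookkeeping of the block products.
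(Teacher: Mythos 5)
Your proposal is correct and rests on the same key idea as the paper's proof: the antisymmetric vectors $(v^T,-v^T,0,0^T)^T$ carry the blocks $H$ and $H+2E_z$, while their orthogonal complement is untouched by the added edges. The paper verifies this eigenvector by eigenvector rather than via the explicit conjugation by $\tilde P$, but your block-diagonalization (whose computations I checked and which are right) is just a cleaner global packaging of the identical argument.
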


\begin{Example} Consider the graph $G \in \mathcal{W}_{11,3}$ in Figure~\ref{figexga} with building blocks $\breve{G}=\mathcal{C}_5$ and $G^*=P_3$, and adjacency vector $y=(1,1,1)^T$. If we choose $z=(1,1,1)^T$ as the characteristic vector, the matrices in the statement of Theorem~\ref{teodifespect} are given by 
\begin{eqnarray*}
L(G^*)=\begin{bmatrix}1 & -1 &0 \\ -1 & 2 & -1 \\0 & -1 & 1 \end{bmatrix}, & H=\begin{bmatrix}2 & -1 & 0 \\ -1 & 3 & -1 \\ 0 & -1 & 2 \end{bmatrix} and & H+2E_z=\begin{bmatrix}4 & -1 & 0 \\ -1 & 5 & -1 \\ & -1 & 4 \end{bmatrix},
\end{eqnarray*}
so that the sets $D$ and $F$ in the theorem satisfy $D = \{ 1, 2, 4 \}$ and $F=\{3,4,6\}$. In particular, we have
\begin{eqnarray*}
Lspect(G)&=&\{0, 0.49257, 1, 1.38197, 2, 2, 2.47142, 3.61803, 4, 4, 9.03601\},\\
Lspect(\mathcal{E}_z(G))&=&\{0, 0.49257, 1.38197, 2, 2.47142, 3, 3.61803, 4,
4, 6, 9.03601\}.
\end{eqnarray*}
\begin{figure}[h!]
       \begin{minipage}[c]{0.3 \linewidth}
           \fbox{\includegraphics[width=\linewidth]{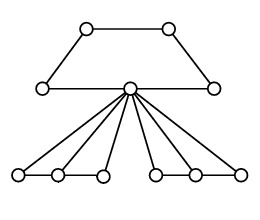}}\\
       \end{minipage}\hspace{2cm}
       \begin{minipage}[c]{0.3 \linewidth}
           \fbox{\includegraphics[width=\linewidth]{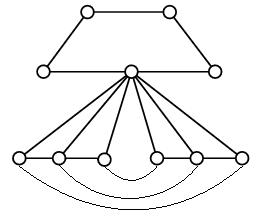}}\\
       \end{minipage}
       \caption{Graphs $G$ and $\mathcal{E}_z(G)$}
       \label{figexga}
\end{figure}
\end{Example}

We shall concentrate on a special class of graphs in $\mathcal{W}_{n,k}$. Recall that a tree is \emph{starlike} if it has a unique vertex with degree larger than two (the degree is therefore equal to the number of leaves in the tree). We focus on a particular class of starlike trees.
\begin{Def}[Graph family $\mathcal{S}_{n,k}$] A graph $G$ lies in  $\mathcal{S}_{n,k}$ if it is a
starlike tree whose central vertex $u$ is adjacent to one of the ends of $h\geq3$
paths $P_{a_i}$, where $a_i$ is even for $1\leq i\leq h-1$, $a_1=a_2=k\geq2$
and $a_h < n/2$ is odd.
\end{Def}
The paths $P_{a_i}$ are called the \emph{branches} of the starlike tree $G \in \mathcal{S}_{n,k}$. In particular, the single path $P_{a_h}$ with an odd number of vertices is the \emph{odd branch} of $G$.

Clearly, given a graph $G\in\mathcal{S}_{n,k}$, it may be viewed as a graph in
$\mathcal{W}_{n,k}$: its building blocks are $G^*=P_k$, whose vertices are labeled in increasing order along the path, and $\breve{G}$, which is rooted at the central vertex of the starlike tree and is obtained from $G$ by removing two occurrences of $P_k$. The adjacency vector is $y=e_k=(0,\ldots,0,1)^T$. Observe that the same tree may belong to $\mathcal{S}_{n,k}$
for different values of $k$.  For instance, Figure~\ref{figexsk} depicts a tree
that is in both $\mathcal{S}_{16,2}$ and $\mathcal{S}_{16,4}$.\\
\begin{figure}[h!]
    \begin{center}\includegraphics[width=0.5\linewidth]{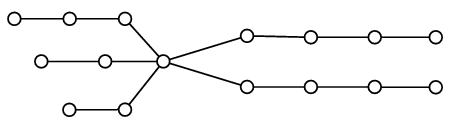}\end{center}
    \caption{A graph in $\mathcal{S}_{16,2}$ and in $\mathcal{S}_{16,4}$}
    \label{figexsk}
\end{figure}

Our interest in this family is justified by the fact that, given a graph $G \in  \mathcal{S}_{n,k}$, we are able to determine precisely which are the $k$ eigenvalues in the set $D$ defined in Theorem~\ref{teodifespect}, and which are the $k$ values that replace them in the Laplacian spectrum of $\mathcal{E}_{e_1}(G)$, where $e_1=(1,0,\ldots,0)^T \in \{0,1\}^k$. Furthermore, and crucially, we are able to prove the following.

\begin{Teo}\label{teoenlapl} Every $G\in\mathcal{S}_{n,k}$ satisfies $LE(G)=LE(\mathcal{E}_{e_1}(G)).$
\end{Teo}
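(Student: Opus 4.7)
My plan is to apply Theorem~\ref{teodifespect} to isolate the spectral effect of $\mathcal{E}_{e_1}$: the Laplacian spectra of $G$ and $\mathcal{E}_{e_1}(G)$ share $n-k$ common eigenvalues $C$, and they differ only on $D = spect(H)$ versus $F = spect(H + 2E_{e_1})$. For $G \in \mathcal{S}_{n,k}$ the adjacency vector is $y = e_k$, so $H = L(P_k) + e_k e_k^T$ is the tridiagonal matrix with diagonal $(1,2,\ldots,2,2)$. A standard Neumann/Dirichlet analysis of the discrete second-difference operator then yields
\[
d_j = 2 - 2\cos\frac{(2j-1)\pi}{2k+1}, \qquad f_j = 2 - 2\cos\frac{2j\pi}{2k+1}, \qquad j = 1,\ldots,k.
\]
Since $k = 2m$ is even by hypothesis, exactly $m$ of the $d_j$ (and $m$ of the $f_j$) strictly exceed $2$, and elementary trigonometric bounds (using $n > 2k$) show that none of them lies inside the narrow interval $[2 - 2/n, 2)$.

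Writing $\bar d(G) = 2 - 2/n$ (tree) and $\bar d(\mathcal{E}_{e_1}(G)) = 2$ (unicyclic) and pairing $d_j$ with $f_j$, I expand the absolute values to obtain
\[
|f_j - 2| - |d_j - (2 - 2/n)| = \begin{cases} (f_j - d_j) - 2/n & \text{if } j > m, \\ -(f_j - d_j) + 2/n & \text{if } j \le m. \end{cases}
\]
The $\pm 2/n$ contributions cancel (there are $m$ terms of each sign), and the remaining sum $\sum_{j > m}(f_j - d_j) - \sum_{j \le m}(f_j - d_j)$ vanishes: expanding $f_j - d_j = 4 \sin(\alpha/2) \sin((4j-1)\alpha/2)$ with $\alpha = \pi/(2k+1)$ and using the sum-of-sines-in-arithmetic-progression formula, each half-sum equals $1$ (the two accounting for the total $\sum_j (f_j - d_j) = 2$, which is the trace of $2E_{e_1}$). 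Therefore the contribution of $D$ and $F$ to $LE(\mathcal{E}_{e_1}(G)) - LE(G)$ is exactly zero.

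The contribution of the $n-k$ common eigenvalues, once one checks that no $c \in C$ lies strictly in $(2 - 2/n, 2)$, equals $(2/n)\bigl(|C| - 2\sigma_2^C\bigr)$ where $\sigma_2^C = |\{c \in C : c \ge 2\}|$. Combining with the $m = k/2$ elements of $D$ above $2$, this vanishes precisely when $G$ has exactly $n/2$ Laplacian eigenvalues at least $2$.

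The main obstacle is therefore the identity $|\{\mu \in Lspect(G) : \mu \ge 2\}| = n/2$. Here the parity hypotheses on $\mathcal{S}_{n,k}$ are essential: the even branches $P_{a_1}, \ldots, P_{a_{h-1}}$ each admit an internal perfect matching, while pairing the central vertex $u$ with its neighbour in the odd branch $P_{a_h}$ leaves an even path $P_{a_h - 1}$ which is also matched internally, so $G$ has a perfect matching and hence matching number $\mu(G) = n/2$. The conclusion follows from the classical result that, for any tree $T$, the number of Laplacian eigenvalues of $T$ in $[2, \infty)$ equals its matching number $\mu(T)$; the same branch-by-branch analysis also rules out common eigenvalues in the open interval $(2 - 2/n, 2)$, completing the proof.
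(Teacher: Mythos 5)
Your overall strategy matches the paper's: apply Theorem~\ref{teodifespect} to reduce everything to (i) the cancellation between $D_k$ and $F_k$ and (ii) a count of how many Laplacian eigenvalues of $G$ lie above each of the two averages $2-2/n$ and $2$. Your treatment of (i) is a correct alternative to the paper's: you pair $d_j$ with $f_j$ and evaluate each half-sum via the sum-of-sines formula together with the trace identity $\sum_j(f_j-d_j)=\operatorname{tr}(2E_{e_1})=2$, whereas the paper pairs the eigenvalues differently and invokes $\sum_{j=1}^k\cos\frac{2j\pi}{2k+1}=-\frac12$ (Lemma~\ref{lemasumcos}); these are equivalent computations and both are fine. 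For the count of eigenvalues $\geq 2$, your route through a perfect matching of $G$ plus the theorem that a tree has exactly $\nu(T)$ Laplacian eigenvalues in $[2,\infty)$ is genuinely different from the paper's, which runs the Jacobs--Trevisan eigenvalue-location algorithm on $G$ with $\alpha=2$; your version is shorter but leans on an external result that you should state and cite precisely.

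The genuine gap is the claim that no common eigenvalue lies in the interval $(2-2/n,\,2)$. Your formula $(2/n)(|C|-2\sigma_2^C)$ for the contribution of the common eigenvalues is only valid under that assumption (an eigenvalue $c$ strictly inside contributes $4-2c-2/n$, which is not $\pm 2/n$), and you dispose of it with ``the same branch-by-branch analysis also rules out common eigenvalues in the open interval.'' But the branch-by-branch analysis you actually gave is a matching argument; it counts eigenvalues relative to the threshold $2$ and says nothing about their location relative to $2-2/n$. This is precisely the content of Lemma~\ref{lemasigmag} in the paper (the statement $\sigma^G=n/2$, as opposed to $\sigma^{\mathcal{E}_{e_1}(G)}=n/2$), and it is the technically hardest step: the paper proves it by comparing each branch $P_{a_i}$ to a path $P_{a_i+1}$ or $P_{2a_h+1}$ and using monotonicity of the algorithm's output in $\alpha$, and it is the only place where the hypothesis $a_h<n/2$ from the definition of $\mathcal{S}_{n,k}$ is used (the paper's Remark~1 makes this explicit). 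Your proposal never invokes $a_h<n/2$, which is a sign that this step has not actually been carried out; as written, the proof is incomplete at exactly this point.
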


As a direct consequence of Theorem~\ref{teoenlapl}, we derive the following result, which we deem to be the main result in this paper.
\begin{Teo} \label{cor} For every $\ell \geq 2$, there is a family of $\ell$ noncospectral unicyclic graphs with the same Laplacian energy, each with $n=2\ell^2+2\ell+2$ vertices. In particular, for values of $n$ of this type, there is a family of  $\Theta(\sqrt{n})$ noncospectral unicyclic graphs on $n$ vertices with the same Laplacian energy.
\end{Teo}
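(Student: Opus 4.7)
The plan is to exhibit, for each $\ell \ge 2$, a single starlike tree $T_\ell$ on $n = 2\ell^2+2\ell+2$ vertices that simultaneously belongs to $\mathcal{S}_{n,k}$ for every $k\in\{2,4,\ldots,2\ell\}$; applying Theorem~\ref{teoenlapl} to each of these $\ell$ different realisations will then produce $\ell$ unicyclic graphs with a common Laplacian energy.

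Concretely, I take $T_\ell$ to be the starlike tree whose central vertex $u$ is joined to one endpoint of two copies of $P_{2j}$ for every $j = 1, \ldots, \ell$, and is also adjacent to a single additional leaf (the odd branch $P_1$). A direct count gives $1 + 2(2 + 4 + \cdots + 2\ell) + 1 = 2\ell^2 + 2\ell + 2 = n$, and the conditions in the definition of $\mathcal{S}_{n,k}$ are met for every $k \in \{2,4,\ldots,2\ell\}$: select the two branches of length $k$ as $a_1 = a_2 = k$, observe that every other non-odd branch is even, and verify $a_h = 1 < n/2$ for $\ell \ge 2$. For each such $k$, applying $\mathcal{E}_{e_1}$ to the corresponding realisation of $T_\ell$ adds exactly one edge, joining the two endpoints of the selected $P_k$ branches that are farthest from $u$. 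The resulting graph $G_k$ is therefore unicyclic and its unique cycle has length $2k+1$.

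Two things then need to be verified. Equal Laplacian energy is immediate: Theorem~\ref{teoenlapl} yields $LE(G_k) = LE(T_\ell)$ for every admissible $k$, so $G_2, G_4, \ldots, G_{2\ell}$ all share the same Laplacian energy. For pairwise noncospectrality I would use the elementary fact that a unicyclic graph has exactly as many spanning trees as its unique cycle length, combined with the Matrix-Tree identity $\tau(G) = \frac{1}{n}\prod_{i=1}^{n-1}\mu_i$: since the cycle lengths $2k+1$ for $k \in \{2,4,\ldots,2\ell\}$ are all distinct, so are the products of the nonzero Laplacian eigenvalues of the graphs $G_k$, whence the spectra cannot coincide. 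The asymptotic statement $\ell = \Theta(\sqrt{n})$ is then immediate from $n = \Theta(\ell^2)$.

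I do not anticipate any real obstacle. The only point requiring care is the combinatorial observation that a single tree $T_\ell$ admits $\ell$ simultaneous valid decompositions as an element of $\mathcal{S}_{n,k}$; once this is in place, the entire argument is a one-step application of Theorem~\ref{teoenlapl} plus a one-line spectral invariant.
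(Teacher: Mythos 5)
Your proposal is correct, and the construction is exactly the one in the paper: the paper's proof of Theorem~\ref{cor} (via the more general Theorem~\ref{cor2} with $\gamma=1$) builds the same starlike tree $G\in\mathcal{S}_{n,2}\cap\mathcal{S}_{n,4}\cap\cdots\cap\mathcal{S}_{n,2\ell}$ with two copies of $P_{2j}$ for each $j$ plus one odd branch $P_1$, and applies $\mathcal{E}_{e_1}$ in each of the $\ell$ realisations. The only place where you genuinely diverge is the noncospectrality step. The paper argues spectrally: the eigenvalue $\alpha_i=2+2\cos\bigl(\tfrac{2\pi}{4i+1}\bigr)$ lies in $D_{2i}$ but not in $F_{2i}$, so its multiplicity drops when passing from $G$ to $G_i$, while $\alpha_i\notin D_{2j}$ for $j\neq i$, so the multiplicities in $G_i$ and $G_j$ differ. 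You instead invoke the Matrix--Tree theorem: each $G_k$ is connected and unicyclic with girth $2k+1$, hence has exactly $2k+1$ spanning trees, and $\tau(G_k)=\frac{1}{n}\prod_{i=1}^{n-1}\mu_i$ forces the products of nonzero Laplacian eigenvalues --- and therefore the spectra --- to be pairwise distinct. Both arguments are valid; yours is more elementary and self-contained (it needs no information about the sets $D_k$ and $F_k$ beyond the fact that the resulting graphs are unicyclic with distinct cycle lengths), whereas the paper's argument identifies a concrete eigenvalue whose multiplicity distinguishes the graphs, which is slightly more informative about where the spectra actually differ.
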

The problem of generating families of noncospectral equienergetic graphs has attracted a good deal of attention in the context of the (standard) energy associated with a graph, which was introduced by Gutman~\cite{G78} and is based on the spectrum of the adjacency matrix. To cite one of the many developments in this direction, we mention the work of Ramane~\etal~\cite{Ram2004}, who showed that there are infinitely many pairs of noncospectral equienergetic graphs so that the graphs in each pair are connected and have the same number of vertices and edges. Our families of unicyclic graphs with the same Laplacian energy may be seen as a counterpart of this result. Moreover, Li and So~\cite{LS} constructed infinitely many pairs of equienergetic graphs where one of the graphs is obtained from the other by deleting an edge. We have found pairs with the same property in the Laplacian context, namely the pairs $(\mathcal{E}_{e_1}(G),G)$ with $G \in \mathcal{S}_{n,k}$.

The remainder of the paper is organized as follows. In Section~\ref{def}, we study the way in which the Laplacian spectrum of the elements of $\mathcal{W}_{n,k}$ is affected by the operation $\mathcal{E}_z$. This characterization leads to the proofs of Theorem~\ref{teoenlapl} and Theorem~\ref{cor} in Section~\ref{energy}. Section~\ref{additional} contains the proofs of a few technical results used in the previous sections.

\section{The connection between $Lspect(G)$ and $Lspect(\mathcal{E}_z(G))$}\label{def}

In this section, we prove Theorem~\ref{teodifespect}, which relates the Laplacian spectrum of a
graph $G\in\mathcal{W}_{n,k}$ with the Laplacian spectrum of $\mathcal{E}_z(G)$.

\begin{proof}[Proof of Theorem~\ref{teodifespect}] Let $G$ be a graph in $\mathcal{W}_{n,k}$ with building blocks $G^{\ast}$ and $\breve{G}$, and adjacency vector $y$. Assume that the vertex set of $G$ is ordered according to a canonical labeling. The Laplacian matrix $L=L(G)$ has the form
\begin{equation}\label{tapsi1}
L=\begin{bmatrix} H &&-y& \\ &H&-y& \\ -y^T&-y^T&\delta&t^T \\ &&t&B
\end{bmatrix}
\end{equation}
where $H=L(G^*)+E_y$ is the matrix of order $k$ that coincides with the Laplacian
matrix of $G^*$, except for the diagonal, where each entry is assigned one unit more
if the respective vertex is adjacent to $u$. Moreover, $\delta=d(u)$
is the degree of the vertex $u$, while $B$ and $t$ are, respectively, a submatrix of $L(\breve{G})$ and a vector, both of order $n-k-1$, associated with the remaining $n-2k-1$ vertices of $G$ (that is, with the vertices of $\breve{G}-u$).

We first show that $D \subset Lspec(G)$. Let $\alpha_1,\ldots,\alpha_k$ be the eigenvalues of $H$ (listed according to their multiciplity). Since $H$ is symmetric, we may associate an eigenvector $v_i$ with each $\alpha_i$ so that $\{v_1,\ldots,v_k\}$ is an orthogonal basis of $\mathbb{R}^k$.

In the remainder of this proof, a vector $w \in \mathbb{R}^n$ will be written as $w=(a^T,b^T,c,d^T)^T$, where
$a,b\in\mathbb{R}^k$, $c\in\mathbb{R}$ and $d\in\mathbb{R}^{n-2k-1}$.
For $w_j=(v_j^T,-v_j^T,0,0^T)^T$, we have
\begin{eqnarray*}
L\cdot w_j=\begin{bmatrix} H &&-y& \\ &H&-y& \\ -y^T&-y^T&\delta&t^T \\
&&t&B \end{bmatrix} \begin{bmatrix}v_j\\-v_j\\0\\0\end{bmatrix}=
\begin{bmatrix}H\cdot v_j -0y\\-H\cdot v_j-0y\\-y^Tv_j+y^Tv_j+0\delta+t^T\cdot 0\\
0t+B\cdot 0\end{bmatrix}= \begin{bmatrix}\alpha_j v\\ -\alpha_j v\\0\\0
\end{bmatrix}=\alpha_j w_j,
\end{eqnarray*}
implying that $D \subset Lspect(G)$.

We now prove that, for any characteristic vector $z \in \{0,1\}^k$, the remaining $n-k$ eigenvalues of $L(G)$ are Laplacian eigenvalues of $\mathcal{E}_z(G)$. Since $\{v_1,\ldots,v_k\}$ is a basis of the space $\mathbb{R}^k$, there are constants $\beta_{i,j}$, for $i,j\in\{1,\ldots,k\}$, such that $$\sum_{i=1}^k \beta_{i,j} v_i = e_j,$$
where $e_j$ is the $j$-th canonical vector in $\mathbb{R}^k$. Clearly, we also have $$\sum_{i=1}^k \beta_{i,j} w_i = \begin{bmatrix}e_j\\-e_j\\0\\0
\end{bmatrix}:=e_j^*.$$
Since the matrix $L$ is symmetric, we may turn the set $\{e_1^*,\ldots,e_k^*\}$ into a basis of $\mathbb{R}^n$ by adding $n-k$ orthogonal eigenvectors of $L$, which are also orthogonal to all $w_i$ and, consequently, orthogonal to all $e_j^*$. Let $\lambda \in Lspect(G)\setminus D$ with eigenvector $w$. The
Laplacian matrix of $\mathcal{E}_z(G)$ has the form
\begin{equation}\label{L(E)}
L(\mathcal{E}_z(G))=L(G)+E=\begin{bmatrix} H &&-y& \\ &H&-y& \\
-y^T&-y^T&\delta&t^T\\ &&t&B \end{bmatrix}+\begin{bmatrix}E_z&-E_z&&\\
-E_z&E_z&&\\&&0&\\&&&0\end{bmatrix}.
\end{equation}
Note that each of the first $k$ rows of $E$ is either ${e_j^*}^T$ (for some $j$) or a row of zeros, and that each of the next $k$ rows is either $-{e_j^*}^T$ (for some $j$) or a
row of zeros, so that
\begin{equation*}
L_{\mathcal{E}_z(G)}\cdot w = L_{G}\cdot w + E\cdot w = \lambda w +
\begin{bmatrix}0\\ \vdots\\{e_j^*}^T\cdot w\\ \vdots \end{bmatrix} =
\lambda w,
\end{equation*}
because $w$ is orthogonal to every $e_j^*$. 

To conclude the proof, we find $k$ eigenvalues of $\mathcal{E}_z(G)$ whose corresponding eigenvectors generate the vector space spanned by $\{e_1^*,\ldots,e_k^*\}$. To this end, let $\{\gamma_1,\ldots,\gamma_k\}$ be the (multi)set of eigenvalues of $H+2E_z$ and let $\{v_1,\ldots,v_k\}$ be an orthogonal set of eigenvectors such that each vector $v_i$ corresponds to the eigenvalue $\gamma_i$. Setting $w_i=(v_i^T,-v_i^T,0,0^T)^T$, we have
\begin{equation*}
L_{\mathcal{E}_z(G)}\cdot w_i = L_{G}\cdot w_i + E\cdot w_i = \begin{bmatrix}H\cdot v_i\\ -H\cdot v_i\\ 0 \\0 \end{bmatrix} +
\begin{bmatrix}2E_z\cdot v_i\\ -2E_z\cdot v_i \\ 0 \\0 \end{bmatrix} = \begin{bmatrix}\gamma v_i\\ -\gamma v_i \\ 0 \\0 \end{bmatrix}=\gamma w_i.
\end{equation*}
So each $\gamma_i\in F=spect(H+2E_z)$ is an eigenvalue of $L_{\mathcal{E}_z(G)}$, and the set $\{w_1,\ldots,w_k\}$ spans the vector space with basis $\{e_1^*,\ldots,e_k^*\}$, as required.
\end{proof}

Our next objective is to study the Laplacian spectrum of a graph $G \in \mathcal{S}_{n,k}$. More precisely, in the case when the characteristic vector $z$ is given by $e_1 \in \{0,1\}^k$, we determine the sets $D$ and $F$ associated with a graph $G  \in \mathcal{S}_{n,k}$, which are defined in Theorem~\ref{teodifespect}. Actually, we prove this result for a slightly more general class of graphs, which we call $\mathcal{S}_{n,k}^\ast$ and which contains all graphs in $\mathcal{W}_{n,k}$ such that $G^{\ast}$ is a path $P_k$ (not necessarily even) and $y$ is the canonical vector $e_k$. (Observe that $\breve{G}$ is arbitrary.)

To state our result precisely, given a positive integer $k$, let
$$D_k=\left\{2+2\cos{\frac{2j\pi}{2k+1}}:j=1,\ldots,k\right\} \text{~and }F_k=\left\{2-2\cos{\frac{2j\pi}{2k+1}}:j=1,\ldots,k\right\}.$$
\begin{Prop}\label{propdifespectSk} If $G\in\mathcal{S}_{n,k}^{\ast}$ and $z \in \{0,1\}^k$, then
$Lspect(G)\setminus D_k \subset Lspect(\mathcal{E}_{z}(G))$. Moreover, for $z=e_1$, we have
$Lspect(\mathcal{E}_{e_1}(G))=(Lspect(G)\setminus D_k)\cup F_k$.
\end{Prop}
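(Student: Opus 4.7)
The plan is to reduce Proposition~\ref{propdifespectSk} to two explicit spectral computations via Theorem~\ref{teodifespect}, then handle those computations by a symmetry argument on a path and a cycle of length $2k+1$.

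For $G\in\mathcal{S}_{n,k}^{\ast}$ the matrix $H=L(P_k)+E_{e_k}$ appearing in Theorem~\ref{teodifespect} is the $k\times k$ tridiagonal matrix with diagonal $(1,2,2,\ldots,2)$ and $-1$ on the sub- and super-diagonals. Theorem~\ref{teodifespect} asserts $spect(H)\subset Lspect(G)$ and
\[
Lspect(\mathcal{E}_z(G))=(Lspect(G)\setminus spect(H))\cup spect(H+2E_z)
\]
for every $z\in\{0,1\}^k$. Both claims of the proposition therefore follow at once from the two identities $spect(H)=D_k$ and $spect(H+2e_1e_1^T)=F_k$.

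To prove $spect(H)=D_k$, I would realize $H$ as the action of $L(P_{2k+1})$ on its antisymmetric subspace. Labeling the vertices of $P_{2k+1}$ symmetrically as $-k,\ldots,0,\ldots,k$, the reflection $i\mapsto -i$ is an automorphism of $P_{2k+1}$, so vectors $w$ with $w_{-i}=-w_i$ (hence $w_0=0$) form a $k$-dimensional invariant subspace of $L(P_{2k+1})$. The induced action on coordinates $1,\ldots,k$ is the tridiagonal matrix with diagonal $(2,2,\ldots,2,1)$ and $-1$ on the off-diagonals, which is $H$ up to coordinate reversal: the constraint $w_0=0$ kills one neighbour of coordinate $1$, producing diagonal entry $2$, while the leaf at vertex $k$ yields diagonal entry $1$. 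Using the standard cosine eigenbasis of $L(P_{2k+1})$ with eigenvalues $2-2\cos(j\pi/(2k+1))$, $j=0,\ldots,2k$, a short check shows that an eigenvector is antisymmetric precisely when $j$ is odd; the identity $2+2\cos(2m\pi/(2k+1))=2-2\cos((2k+1-2m)\pi/(2k+1))$ then identifies the resulting set with $D_k$.

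For $spect(H+2e_1e_1^T)=F_k$ I would run the analogous argument on $C_{2k+1}$, labeled $-k,\ldots,0,\ldots,k$ with wrap-around edge $\{k,-k\}$. Under the antisymmetry $w_{-i}=-w_i$ this edge contributes $-w_{-k}=w_k$ to the vertex-$k$ row, raising the corner diagonal of the antisymmetric restriction from $2$ to $3$ and producing $H+2e_1e_1^T$ (again up to reversal). The sine functions $l\mapsto\sin(2\pi jl/(2k+1))$, $j=1,\ldots,k$, are antisymmetric eigenvectors of $L(C_{2k+1})$ with eigenvalues $2-2\cos(2\pi j/(2k+1))$, and they are linearly independent, so they form a basis of the $k$-dimensional antisymmetric subspace and exhibit exactly the eigenvalues in $F_k$. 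The main step requiring care is the boundary bookkeeping at the extremes in both restrictions — confirming that antisymmetry reproduces precisely the diagonals $(1,2,\ldots,2)$ for the path case and $(3,2,\ldots,2)$ for the cycle case (after reversal) — together with the dimension count showing that the $k$ trigonometric eigenvectors identified in each case span the full antisymmetric subspace. Once this is verified, both spectral identities fall out and the proposition is an immediate consequence of Theorem~\ref{teodifespect}.
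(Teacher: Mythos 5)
Your reduction is exactly the paper's: both proofs invoke Theorem~\ref{teodifespect} with $H=L(P_k)+E_{e_k}$ and observe that everything follows from the two identities $spect(H)=D_k$ and $spect(H+2e_1e_1^T)=F_k$. Where you diverge is in how those spectra are computed. The paper simply notes that both matrices are instances of the tridiagonal matrix $A_k$ of Lemma~\ref{lemaAs} (with $\alpha=1$ and $\alpha=-1$ respectively) and quotes Yueh's closed-form eigenvalue formula. You instead realize $H$ as the restriction of $L(P_{2k+1})$ to the antisymmetric subspace of the reflection automorphism, and $H+2e_1e_1^T$ as the analogous restriction of $L(C_{2k+1})$, then read off the eigenvalues from the standard cosine/sine eigenbases. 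I checked your boundary bookkeeping and it is right: the constraint $w_0=0$ gives diagonal entry $2$ at the coordinate adjacent to the fixed vertex, the leaf of $P_{2k+1}$ gives the entry $1$, the wrap-around edge of $C_{2k+1}$ gives $3-w_{k-1}$ in the corner row, and in both cases the restricted matrix is $H$ (resp.\ $H+2e_1e_1^T$) after coordinate reversal. The parity check (antisymmetric path eigenvectors occur exactly for odd $j$, and $2-2\cos((2k+1-2m)\pi/(2k+1))=2+2\cos(2m\pi/(2k+1))$) and the dimension count for the cycle are also correct. Your route is self-contained and arguably more illuminating — it explains \emph{why} $D_k$ and $F_k$ are "path-like" and "cycle-like" spectra, which resonates with the fact that $\mathcal{E}_{e_1}$ closes the two branches into an odd cycle — at the cost of requiring the explicit Laplacian eigenvectors of $P_{2k+1}$ and $C_{2k+1}$; the paper's citation of Yueh is shorter but imports an external tridiagonal eigenvalue theorem as a black box.
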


To prove Proposition~\ref{propdifespectSk}, we shall compute the sets $D$ and $F$ of Theorem~\ref{teodifespect}. To this end, the following technical lemma will be particularly useful. For a proof of this result, see Yueh~\cite[Theorem 1 and 2]{yueh}.
\begin{Lema}\label{lemaAs} Let $A_s$ be a tridiagonal matrix such that
\begin{equation}\label{eqAS}
A_s=\begin{pmatrix}
-\alpha+b & c &  &  \\
a & b & \ddots &  \\
& \ddots & b & c \\
&  & a & -\beta+b \\ \end{pmatrix} \in \mathbb{R}^{s \times s}.
\end{equation}
If $|\alpha|=\sqrt{ac}\neq0$ and $\beta=0$, then $Spect(A_s)=\left\{b+2\alpha\cos{\frac{2j\pi}{2s+1}}:j=1,\ldots,s\right\}$.
\end{Lema}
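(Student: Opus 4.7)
The plan is to reduce the eigenproblem of $A_s$ to a second order linear recurrence with two boundary conditions, and then solve that recurrence by a trigonometric ansatz. First, I would subtract $bI$ from $A_s$: since this only shifts the spectrum by $-b$, it suffices to compute the spectrum of $B_s:=A_s-bI$, which, because $\beta=0$, is tridiagonal with diagonal $(-\alpha,0,\ldots,0)$, subdiagonal $a$, and superdiagonal $c$. The condition $|\alpha|=\sqrt{ac}\neq 0$ guarantees that $ac>0$, so $a$ and $c$ have the same sign and $\sqrt{a/c}$ is a well-defined positive real.

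Next, I would write the eigenvalue equation $B_sv=\lambda v$ coordinate-wise and apply the diagonal similarity $v_j=(a/c)^{j/2}u_j$. A direct computation turns the interior recurrence $av_{j-1}+cv_{j+1}=\lambda v_j$ into the symmetric three-term recurrence
\begin{equation*}
u_{j-1}+u_{j+1}=\frac{\lambda}{\sqrt{ac}}\,u_j,\qquad 2\le j\le s-1.
\end{equation*}
Setting $\lambda=2\sqrt{ac}\cos\theta$ (one can always parametrize this way in $\mathbb{C}$), the general solution is $u_j=P\cos(j\theta)+Q\sin(j\theta)$. The first and last rows of $B_s$ correspond to the boundary conditions. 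The first row $(-\alpha)v_1+cv_2=\lambda v_1$ translates, after the substitution and using $\epsilon:=\alpha/\sqrt{ac}\in\{-1,+1\}$, to $u_2=(2\cos\theta+\epsilon)u_1$, which combined with the recurrence at $j=1$ forces the ``phantom'' value $u_0=-\epsilon u_1$. Similarly, the last row $av_{s-1}=\lambda v_s$ yields $u_{s-1}=2\cos\theta\,u_s$ and hence $u_{s+1}=0$.

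Substituting the general solution into the two boundary equations $P=-\epsilon(P\cos\theta+Q\sin\theta)$ and $P\cos((s+1)\theta)+Q\sin((s+1)\theta)=0$, I would impose that this homogeneous $2\times 2$ system in $(P,Q)$ has a nontrivial solution. Expanding the determinant and using the sine subtraction formula produces the characteristic equation
\begin{equation*}
\sin((s+1)\theta)+\epsilon\sin(s\theta)=0.
\end{equation*}
Sum-to-product factors this as $2\sin\bigl((2s+1)\theta/2\bigr)\cos(\theta/2)=0$ when $\epsilon=+1$ and as $2\cos\bigl((2s+1)\theta/2\bigr)\sin(\theta/2)=0$ when $\epsilon=-1$. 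In the first case the admissible angles are $\theta_j=2j\pi/(2s+1)$, in the second $\theta_j=(2j-1)\pi/(2s+1)$.

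The delicate step, which I expect to be the main obstacle, is checking that each case produces exactly $s$ distinct eigenvalues matching the claimed formula. I would discard the spurious factors $\cos(\theta/2)=0$ or $\sin(\theta/2)=0$, which correspond to $\theta\in\{0,\pi,2\pi\}$ and give either $(P,Q)=(0,0)$ or $u_j\equiv 0$; verify that $\cos\theta_j$ takes $s$ distinct values in each case (this uses that $j$ ranges over $\{1,\ldots,s\}$ while the cosine identifications would require $j+j'=2s+1$, which is out of range); and match the two parametrizations. For $\epsilon=+1$ the eigenvalues are $b+2\sqrt{ac}\cos(2j\pi/(2s+1))=b+2\alpha\cos(2j\pi/(2s+1))$ directly. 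For $\epsilon=-1$, using the identity $\cos((2j-1)\pi/(2s+1))=-\cos(2(s-j+1)\pi/(2s+1))$ and reindexing $j\mapsto s-j+1$ converts the set $\{2\sqrt{ac}\cos((2j-1)\pi/(2s+1))\}$ into $\{-2\sqrt{ac}\cos(2j\pi/(2s+1))\}=\{2\alpha\cos(2j\pi/(2s+1))\}$, unifying the two cases into the single formula claimed in the lemma.
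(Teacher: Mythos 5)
Your argument is correct and complete in outline. Note, however, that the paper does not prove this lemma at all: it simply cites Yueh [Theorems 1 and 2], so there is no internal proof to compare against. What you supply is the standard self-contained derivation: shift by $bI$, symmetrize by the diagonal similarity $v_j=(a/c)^{j/2}u_j$ (legitimate because $ac>0$), reduce to the three-term recurrence with the phantom boundary values $u_0=-\epsilon u_1$ and $u_{s+1}=0$, and extract the characteristic equation $\sin((s+1)\theta)+\epsilon\sin(s\theta)=0$. All the computations check out, including the reindexing $j\mapsto s-j+1$ via $\cos((2j-1)\pi/(2s+1))=-\cos(2(s-j+1)\pi/(2s+1))$ that unifies the $\epsilon=\pm1$ cases into the single formula $b+2\alpha\cos(2j\pi/(2s+1))$. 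One simplification worth making explicit: once you have exhibited $s$ admissible angles $\theta_j\in(0,\pi)$ with distinct cosines, each carrying a nonzero eigenvector (nontrivial $(P,Q)$ forces $u\not\equiv0$ on $\{1,\dots,s\}$ because $\sin\theta_j\neq0$), you have $s$ distinct eigenvalues of an $s\times s$ matrix, and the spectrum is exhausted; there is then no need to separately rule out the spurious factors $\cos(\theta/2)=0$, $\sin(\theta/2)=0$, complex $\theta$, or the degenerate ansatz at $\theta\in\{0,\pi\}$ — they are excluded automatically by counting. This makes the "delicate step" you flag essentially painless.
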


\begin{proof}[Proof of Proposition~\ref{propdifespectSk}] By Theorem~\ref{teodifespect}, for every $z \in \{0,1\}^k$, we have the relation
$Lspect(G)\setminus D \subset Lspect(\mathcal{E}_{z}(G))$, where
$D=spect(L(P_k)+E_{e_k})$. Since
$L(P_k)+E_{e_k}=A_k$, we have $D=D_k$ by Lemma \ref{lemaAs}, where $A_k$ is defined in~\eqref{eqAS} for $a=c=-1, b=2, \beta=0$ and $\alpha=\sqrt{ac}=1$. On the other hand, for $z=e_1$, we have $F=spect(H+2E_{e_1})$ and we obtain $F=F_k$ because $H+2E_{e_1}=A_k$ in~\eqref{eqAS}, where $a=c=-1, b=2, \beta=0$ and $\alpha=-1$.
\end{proof}

\section{Families of Laplacian equienergetic unicyclic graphs}\label{energy}

We use the results of the previous section to find families of noncospectral unicyclic graphs with the same Laplacian energy. Observe that, using the identity $\sum_{i=1}^{n} \mu_i = n \overline{d}$, we may express the Laplacian energy of a graph $G$ as $$LE(G)=2\sum_{i=1}^\sigma \mu_i - 2\sigma\overline{d},$$ where $\sigma$ is the number of eigenvalues larger than or equal to the average degree $\overline{d}$ of $G$. Our objective here is to provide a proof of Theorem~\ref{teoenlapl}, that is, we wish to show that, for every $G\in \mathcal{S}_{n,k}$, we have $LE(G)-LE(\mathcal{E}_{e_1}(G))=0.$

For a graph $G$, we let $\mu_i^G$, $\overline{d}^G$ and $\sigma^G$ be, respectively, the $i$-th largest Laplacian eigenvalue of $G$, the average degree of $G$ and the number of eigenvalues that are larger than or equal to the average degree of $G$.

\begin{Lema}{\label{lemaDeltaLE}} Let $G$ and $G'$ be $n$-vertex graphs such that $\sigma^G=\sigma^{G'}=\sigma$. We have $$\Delta LE(G',G)=LE(G')-LE(G) = 2\sum_{i=1}^\sigma (\mu_i^{G'}-\mu_i^G) - \frac{4\sigma\Delta e}{n},$$
where $\Delta e=e(G')-e(G)$.
\end{Lema}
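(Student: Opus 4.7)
The plan is to derive the formula by straightforward algebraic manipulation of the identity displayed just before the lemma, namely
$$LE(G) = 2\sum_{i=1}^{\sigma} \mu_i - 2\sigma \overline{d},$$
which is valid when $\sigma$ is the number of Laplacian eigenvalues of $G$ that are at least $\overline{d}$. The crucial hypothesis $\sigma^G = \sigma^{G'} = \sigma$ is what allows the sums in this identity to have the same number of terms for the two graphs, so the differences can be taken termwise.

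First I would apply this identity to both $G$ and $G'$ and subtract, obtaining
$$LE(G') - LE(G) = 2\sum_{i=1}^{\sigma}\bigl(\mu_i^{G'} - \mu_i^G\bigr) - 2\sigma\bigl(\overline{d}^{G'} - \overline{d}^G\bigr).$$
Then I would rewrite the last term using the fact that $\overline{d}^G = 2e(G)/n$ and $\overline{d}^{G'} = 2e(G')/n$, which gives
$$\overline{d}^{G'} - \overline{d}^G = \frac{2(e(G') - e(G))}{n} = \frac{2\,\Delta e}{n}.$$
Substituting this expression back yields precisely the claimed formula.

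There is no real obstacle here; the argument is routine once one accepts the decomposition $LE(G) = 2\sum_{i=1}^{\sigma}\mu_i - 2\sigma\overline{d}$, which itself follows from splitting the sum defining $LE(G)$ at the index $\sigma$ and using $\sum_{i=1}^{n}\mu_i = n\overline{d}$ to rewrite the tail. The only subtlety worth flagging is that the equality of the truncation indices $\sigma^G = \sigma^{G'}$ is essential: without it, the differences $\mu_i^{G'} - \mu_i^G$ would not correctly account for eigenvalues that cross the threshold $\overline{d}$ when passing from $G$ to $G'$.
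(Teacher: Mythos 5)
Your proposal is correct and follows exactly the same route as the paper: apply the identity $LE(G)=2\sum_{i=1}^{\sigma}\mu_i-2\sigma\overline{d}$ to both graphs, subtract, and rewrite $\overline{d}^{G'}-\overline{d}^{G}$ as $2\Delta e/n$. Your added remark on why the hypothesis $\sigma^G=\sigma^{G'}$ is essential is a nice touch, but the argument itself is identical to the one in the paper.
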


\begin{proof} Since $\overline{d}^G=\frac{2e(G)}{n}$, the expression for $\Delta LE$ is
\begin{eqnarray*}
\Delta LE &=& 2\sum_{i=1}^\sigma \mu_i^{G'} - 2\sigma\overline{d}^{G'} - 2\sum_{i=1}^\sigma \mu_i^G + 2\sigma\overline{d}^G \\
&=& 2\sum_{i=1}^\sigma (\mu_i^{G'}-\mu_i^G) - 2\sigma\left(\frac{2e(G')-2e(G)}{n}\right)\\
&=& 2\sum_{i=1}^\sigma (\mu_i^{G'}-\mu_i^G) - 2\sigma\left(\frac{2\Delta e}{n}\right).
\end{eqnarray*}
\end{proof}

In light of this result, it will be convenient to know the number of Laplacian eigenvalues of a graph that are larger than or equal to their average value. This is settled by the following lemma, which will be proved in the next section.

\begin{Lema}\label{lemasigmag} Every graph $G\in\mathcal{S}_{n,k}$ satisfies  $\sigma^G=\sigma^{\mathcal{E}_{e_1}(G)}=\frac{n}{2}$.
\end{Lema}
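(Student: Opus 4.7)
The plan is to count Laplacian eigenvalues of $G$ that are at least $2$ using the Jacobs--Trevisan (JT) algorithm for trees, and then transfer this count to $G' := \mathcal{E}_{e_1}(G)$ via the spectral identity from Proposition~\ref{propdifespectSk}. The parity constraints on the $a_i$ force both $k$ and $n$ to be even, so $\overline{d}^G = 2(n-1)/n < 2$ while $\overline{d}^{G'} = 2$.

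First I would apply JT to $G$ rooted at the central vertex $u$ with threshold $\alpha = 2$. Processing each branch $P_{a_i}$ from its leaf inward, the JT recursion $g_1 = 1 - \alpha$ and $g_k = 2 - \alpha - 1/g_{k-1}$ collapses at $\alpha = 2$ to alternating values $\pm 1$, producing $(-1)^{a_i}$ at the vertex adjacent to $u$. Since $a_i$ is even for $i \leq h - 1$ and $a_h$ is odd, the update at $u$ simplifies to $f(u) = (h - 2) - (h - 2) = 0$. Summing, JT yields $n/2 - 1$ positive entries, $n/2$ negative entries, and one zero, so $L(G)$ has $n/2 - 1$ eigenvalues strictly above $2$, a single eigenvalue equal to $2$, and $n/2$ eigenvalues strictly below $2$; in particular $G$ has $n/2$ Laplacian eigenvalues in $[2, \infty)$. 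To upgrade this to $\sigma^G = n/2$ I would rerun JT at $\alpha = \overline{d}^G = 2 - 2/n$, using the closed form $g_k(\alpha) = \cos((2k+1)\theta/2)/\cos((2k-1)\theta/2)$ with $\cos\theta = 1 - \alpha/2$ for the branch values, together with the derivative computation $\partial f(u)/\partial\alpha = -n$ at $\alpha = 2$. Both show that $f(u)$ becomes strictly positive just below $\alpha = 2$ and that the branch sign pattern is preserved; hence JT produces $n/2$ positives and $n/2$ negatives at $\alpha = \overline{d}^G$, giving $\sigma^G = n/2$.

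For $G'$, Proposition~\ref{propdifespectSk} gives $Lspect(G') = (Lspect(G) \setminus D_k) \cup F_k$. Since $k$ is even and $\cos(2j\pi/(2k+1))$ never vanishes for integer $j \in \{1, \ldots, k\}$, an elementary count shows $|D_k \cap (2, \infty)| = |F_k \cap (2, \infty)| = k/2$ and neither set contains $2$. Therefore $G'$ has exactly as many Laplacian eigenvalues in $[2, \infty)$ as $G$, namely $n/2$, which coincides with $\sigma^{G'}$ because $\overline{d}^{G'} = 2$. The main obstacle I anticipate is the sensitivity step for $G$: showing no Laplacian eigenvalue of $G$ lies in the narrow window $[\overline{d}^G, 2)$ reduces to verifying that none of the cosine factors in $g_k$ crosses zero as $\theta$ decreases by $\arcsin(1/n)$ from $\pi/2$, a check which depends on the precise range of branch lengths permitted by $\mathcal{S}_{n,k}$.
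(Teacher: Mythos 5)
Your two bookend steps match the paper: the run of the Jacobs--Trevisan algorithm at $\alpha=2$ (alternating $\pm1$ on the branches, value $0$ at $u$, hence $n/2$ Laplacian eigenvalues in $[2,\infty)$) and the transfer to $G'=\mathcal{E}_{e_1}(G)$ by counting the $k/2$ elements of each of $D_k$ and $F_k$ that exceed $2$. The gap is exactly the step you flag as the ``main obstacle'', and it is worse than a missing verification: the mechanism you propose is false. You claim that at $\alpha=\overline{d}^G=2-2/n$ ``the branch sign pattern is preserved'' and that $f(u)$ turns positive because $2-2/n$ is ``just below'' $2$. Take $n=100$ and $G\in\mathcal{S}_{100,2}$ with branches $P_2,P_2,P_{94},P_1$. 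The sign of the value at position $j$ of a branch is governed by the inertia of the leading $j\times j$ principal submatrix $M_j$ (diagonal $(1,2,\dots,2)$, off-diagonals $-1$), whose spectrum is $\{2+2\cos\frac{2m\pi}{2j+1}\}$ by Lemma~\ref{lemaAs}. For $j=93$ the eigenvalue $2-2\sin\frac{\pi}{374}\approx 1.9832$ lies inside the window $[2-\frac{2}{n},2)=[1.98,2)$, so $M_{93}$ has one fewer negative eigenvalue at $\alpha=1.98$ than at $\alpha=2$; consequently the values at positions $93$ and $94$ of that branch \emph{swap signs}, and the vertex adjacent to $u$ carries a negative value at $\alpha=\overline{d}^G$. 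A derivative of $f(u)$ at $\alpha=2$ cannot repair this: $2-2/n$ is not ``just below'' $2$ relative to the spectral gaps of these submatrices, and indeed one of your cosine factors does cross zero as $\theta$ decreases by $\arcsin(1/n)$. You also never invoke the hypothesis $a_h<n/2$, which the paper explicitly needs (see the Remark after Theorem~\ref{teoenlapl}).

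What survives is the \emph{count}, not the pattern: in the example above the branch still ends with $47$ positive and $47$ negative values. The paper's proof of Lemma~\ref{lemasigmag} only ever tracks counts, and gets them with no trigonometry: each even branch $P_{a_i}$ is compared with the path $P_{a_i+1}$ run at its own average degree $2-2/(a_i+1)<2-2/n$, and the odd branch with $P_{2a_h+1}$ at $2-2/(2a_h+1)\le 2-2/n$ (this is where $a_h<n/2$ enters), using the known fact that $P_t$ has $\lfloor t/2\rfloor$ Laplacian eigenvalues at or above its average together with monotonicity of the nonnegative-entry count in $\alpha$. This yields at most $\lfloor a_i/2\rfloor$ positives per branch, hence $\sigma^G\le \frac{n-2}{2}+1=\frac n2$, which combined with your $\alpha=2$ count gives equality. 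If you want to keep your closed-form route, replace the sign-pattern claim by ``$M_{a_i}$ has no eigenvalue in $[2-\frac2n,2)$'', which reduces to $\sin\frac{3\pi}{4a_i+2}>\frac1n$ for the even branches (always true, via $\sin x\ge \frac{2x}{\pi}$) and to $\sin\frac{\pi}{4a_h+2}>\frac1n$ for the odd branch (true precisely because $a_h<n/2$); one must then still argue separately that $u$ contributes at most one further positive value.
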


\begin{proof}[Proof of Theorem~\ref{teoenlapl}] Let $G \in \mathcal{S}_{n,k}$. Because of Lemma~\ref{lemasigmag}, we may apply Lemma~\ref{lemaDeltaLE} to $G$ and $G'=\mathcal{E}_{e_1}(G)$ to obtain
$$\Delta LE = \Delta LE(G',G) = 2\sum_{i=1}^\sigma (\mu_i^{G'}-\mu_i^G) - \frac{4\sigma\Delta e}{n} = 2\sum_{i=1}^{\frac{n}{2}}(\mu_i^{G'}-\mu_i^G) - 2,$$
since $\Delta e=1$ and $\sigma= n/2$. We now verify which eigenvalues of $G$ and $G'$ are above or below average (where $\overline{d}^G=2-2/n$ and $\overline{d}^{G'}=2$).

Note that $\cos(2j\pi/(2k+1))$ is a decreasing function of $j \in \{1,2,\ldots,k\}$, which is nonnegative if and only if $j \leq \lfloor k/2+1/4 \rfloor=k/2$. Moreover, for $j=\frac{k}{2}+1$, we have
\begin{eqnarray*}
-\cos{\left(\frac{(k+2)\pi}{2k+1}\right)}&=&\sin{\left(\frac{3\pi}{4k+2}\right)}>\frac{3\pi}{4k+2}-\frac{1}{3!}\left(\frac{3\pi}{4k+2}\right)^3>\frac{3\pi}{4k+4}-\frac{1}{6}\left(\frac{3\pi}{4k}\right)^3\\
&=& \frac{3\pi-2}{4k+4}-\frac{9\pi^3}{128k^3}+\frac{1}{2k+2}>\frac{237k^3-280k-280}{128k^3(k+1)}+\frac{1}{2k+2}>\frac{1}{2k+2},
\end{eqnarray*}
for $k\geq2$. Hence
\begin{eqnarray*}
2+2\cos{\left(\frac{(k+2)\pi}{2k+1}\right)}<2-\frac{2}{2k+2}\leq 2-\frac{2}{n}.
\end{eqnarray*}
We conclude that $\alpha_{\frac{k}{2}+1}<\overline{d}^G$, implying that exactly $\frac{k}{2}$ elements of $D_k$ are larger than or equal to the average $\overline{d}^G$, namely $\alpha_j=2+2\cos{\left(\frac{2j\pi}{2k+1}\right)}$, $1\leq j \leq k/2$.

On the other hand, an element $2-2\cos{\left(\frac{2j \pi}{2k+1}\right)} \in F_k$ is larger than or equal to $\overline{d}^{G'}=2$ if and only if $j\geq\frac{k}{2}+1$. This means that there are exactly $\frac{k}{2}$ elements of $F_k$  that are larger than or equal to $\overline{d}^{G'}$. The remaining $\sigma-\frac{k}{2}$ largest eigenvalues of $G$ and $G'$ coincide by Proposition~\ref{propdifespectSk}. Let $S$ denote this set of common eigenvalues. We have
\begin{eqnarray*}
\Delta LE &=& 2\sum_{i=1}^\frac{n}{2} (\mu_i^{G'}-\mu_i^G) - 2\\
&=& 2\sum_{\mu_i^{G'},\mu_j^G \in S}(\mu_i^{G'}-\mu_j^G)+ 2\sum_{j=1}^{\frac{k}{2}}\left[\left(2-2\cos{\frac{2(j+\frac{k}{2})\pi}{2k+1}}\right)-\left(2+2\cos{\frac{2j\pi}{2k+1}}\right)\right]-2\\
&=&-4\sum_{j=1}^{\frac{k}{2}}\left(\cos{\frac{2(j+\frac{k}{2})\pi}{2k+1}}+\cos{\frac{2j\pi}{2k+1}}\right)-2
\end{eqnarray*}
To conclude the proof, we use the trigonometric identity
$$\sum_{j=1}^{k} \cos{\frac{2j\pi}{2k+1}} = -\frac{1}{2},$$
which leads to $\Delta LE =0$, as required. For completeness, we give a proof of this inequality at the end of the paper (see Lemma~\ref{lemasumcos}).
\end{proof}

\begin{Rem}
Our definition of $\mathcal{S}_{n,k}$ has the restriction $a_h<n/2$ on the length of the odd path because we need this hypothesis in our proof of Lemma~\ref{lemasigmag}. Numerical experiments suggest that this lemma should hold without this restriction.
\end{Rem}

We are now ready to prove Theorem~\ref{cor}, the main theorem of this paper. In fact, we prove the following more general version of this result. (Theorem~\ref{cor} is just Theorem~\ref{cor2} with $\gamma=1$.)
\begin{Teo} \label{cor2} Let $\ell\geq 2$ and $\gamma\geq1$ be integers. There exists a family of $\ell$ unicyclic noncospectral graphs on $n=2\ell^2+2\ell+2\gamma$ vertices with the same Laplacian energy. Moreover, for each $\gamma\geq2$ there are at least two such families.
\end{Teo}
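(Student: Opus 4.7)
The strategy is to construct, for given $\ell\geq2$ and $\gamma\geq1$, a single starlike tree $T$ that can be viewed simultaneously as an element of $\mathcal{S}_{n,2j}$ for every $j=1,\ldots,\ell$, and to take $U_j:=\mathcal{E}_{e_1}(T)$ (with $T$ viewed as a member of $\mathcal{S}_{n,2j}$) as the $\ell$ graphs of the family. Explicitly, I would let $T$ be the starlike tree whose central vertex $u$ is the endpoint of two copies of $P_{2j}$ for each $j=1,\ldots,\ell$, together with $\gamma-1$ further copies of $P_2$ and one copy of $P_1$ serving as the odd branch. A direct count gives $|V(T)|=1+2\sum_{j=1}^{\ell}2j+2(\gamma-1)+1=2\ell^2+2\ell+2\gamma=n$, and for every $j\in\{1,\ldots,\ell\}$ relabeling a pair of length-$2j$ branches as $a_1=a_2=2j$ witnesses $T\in\mathcal{S}_{n,2j}$; the hypothesis $a_h<n/2$ is immediate because the odd branch has a single vertex, and $h=2\ell+\gamma\geq5$. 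Theorem~\ref{teoenlapl} then gives $LE(U_j)=LE(T)$ for every $j$, so all members of the family share the same Laplacian energy. Each $U_j$ is unicyclic: the operation $\mathcal{E}_{e_1}$ adds one edge between the two far leaves of the chosen length-$2j$ branches, and the unique cycle passes through those two branches, the vertex $u$ and the new edge, so it has length $4j+1$.

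Pairwise noncospectrality follows at once from Kirchhoff's matrix-tree theorem: the product $\mu_1\mu_2\cdots\mu_{n-1}$ of nonzero Laplacian eigenvalues of any $n$-vertex graph $G$ equals $n\,\tau(G)$, and a unicyclic graph whose unique cycle has length $c$ satisfies $\tau(G)=c$. Consequently $\prod_{i=1}^{n-1}\mu_i^{U_j}=n(4j+1)$, which takes $\ell$ pairwise distinct values as $j$ ranges over $\{1,\ldots,\ell\}$, so no two $U_j$ can have the same Laplacian multispectrum.

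For the second statement, assuming $\gamma\geq2$, I would define a second tree $T'$ that keeps the same core pairs $P_{2j},P_{2j}$ but uses $\gamma-2$ extra copies of $P_2$ and takes $P_3$ as the odd branch. The same arithmetic shows $|V(T')|=n$, membership $T'\in\mathcal{S}_{n,2j}$ holds for each $j$ (note $a_h=3<n/2$), and the corresponding family $U'_1,\ldots,U'_\ell$ has common Laplacian energy $LE(T')$, with pairwise noncospectrality proved exactly as above. The two families are genuinely distinct because the central vertex has degree $2\ell+\gamma$ in every $U_j$ but degree $2\ell+\gamma-1$ in every $U'_j$, so no graph in one family is isomorphic to any graph in the other. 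The heavy lifting is entirely performed by Theorem~\ref{teoenlapl} (equal energy) and Kirchhoff's theorem (noncospectrality); the only delicate combinatorial point is arranging the branches of $T$ and $T'$ so that the constraint $a_h<n/2$ holds uniformly in $\gamma$, which is what motivates choosing a minimal odd branch ($P_1$ or $P_3$) and absorbing any excess vertices into extra copies of $P_2$.
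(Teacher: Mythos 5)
Your construction of the underlying tree and of the family $U_1,\ldots,U_\ell$ coincides with the paper's: the same starlike tree with two copies of $P_{2j}$ for each $j\le\ell$, a short odd branch, and the surplus $2(\gamma-1)$ vertices absorbed into even branches, followed by the operation $\mathcal{E}_{e_1}$ applied with $k=2j$ for each $j$. The vertex count, the membership checks in $\mathcal{S}_{n,2j}$ (including $a_h<n/2$ and $h\ge 3$), and the appeal to Theorem~\ref{teoenlapl} for equality of the energies all match the paper. Where you genuinely diverge is the proof of pairwise noncospectrality. The paper argues through eigenvalue multiplicities: $\alpha_i=2+2\cos(2\pi/(4i+1))$ loses one unit of multiplicity in passing from $G$ to $G_i$, and it is claimed that $\alpha_i\notin D_j$ for $j\neq i$, so the spectra of $G_i$ and $G_j$ differ. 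You instead invoke the matrix--tree theorem: the product of the nonzero Laplacian eigenvalues of $U_j$ equals $n\,\tau(U_j)=n(4j+1)$ (and you correctly identify the cycle length as $2k+1=4j+1$, rather than the $\mathcal{C}_{2i+1}$ misstated in the paper), and these products are pairwise distinct. This is both more elementary and more robust: the paper's assertion that $\alpha_i$ lies in no $D_{2j}$ with $j\neq i$ actually requires care, since for instance $2+2\cos(2\pi/5)\in D_{12}$ (take $m=5$ in $2+2\cos(2m\pi/25)$), so for large $\ell$ the multiplicity argument would need a patch, whereas the Kirchhoff argument does not. Finally, for the ``at least two families'' claim, the paper only remarks that each configuration of the surplus vertices yields a different family; you make this concrete by exhibiting a second tree with odd branch $P_3$ and separating the two families via the degree of the central vertex, which is a welcome extra level of rigor. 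The proposal is correct.
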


\begin{proof} Consider a graph $G\in\mathcal{S}_{n,2} \cap \mathcal{S}_{n,4} \cap \cdots \cap \mathcal{S}_{n,2\ell}$ given by a central vertex $u$ adjacent to two copies of $P_{2i}$ for every $1\leq i \leq \ell$ and to one copy of $P_1$. The graph constructed so far has $2+2\sum_{i=1}^\ell 2i=2(\ell^2+\ell+1)$ vertices. We distribute the remaining $2(\gamma-1)$ vertices in pairs, either adding them to new paths with an even number of vertices adjacent to $u$ or increasing the branch of odd length, making sure that it does not reach length $\frac{n}{2}$.

Considering $G$ as a graph in $\mathcal{S}_{n,2i}$, for $1\leq i\leq \ell$, we build unicyclic graphs $G_i=\mathcal{E}_{e^{(2i)}_1}(G)$, where $e_1^{(2i)}$ is the canonical vector $e_1$ viewed as a vector in $\mathbb{R}^{2i}$. In particular, each graph $G_i$ contains the cycle $\mathcal{C}_{2i+1}$. The $\ell$ graphs $G_i$ constructed in this way have the same Laplacian energy as $G$ by Theorem \ref{teoenlapl}. Moreover, no pair of graphs in this family is
cospectral. Indeed, the multiplicity of the eigenvalue $\alpha_i=2+2\cos\left(\frac{2\pi}{4i+1}\right)$ in $G_i$ is smaller than in $G$ (since this eigenvalue lies in $D_i$, but not in $F_i$); however, $\alpha_i\notin D_j$ for $i \neq j$ so that the spectra of $G_i$ and $G_j$ differ.

Clearly, for each configuration of the $2(\gamma-1)$ additional vertices of $G$, we create a different family of graphs with the same Laplacian energy.
\end{proof}

\begin{Rem}
Based on Theorem~\ref{cor2}, we may easily extend the conclusion of Theorem~\ref{cor} to all even values of $n$. In other words, for all even values of $n$, there is a family of  $\Theta(\sqrt{n})$ noncospectral unicyclic graphs on $n$ vertices with the same Laplacian energy. Indeed, let $\ell$ be the largest integer such that $2\ell^2+2\ell< n$ and, for these values of $\ell$ and $n$, construct a family of graphs with $\gamma=(n-2\ell^2-2\ell)/2$ as in Theorem~\ref{cor2}. It contains $\ell$ noncospectral unicyclic graphs on $n$ vertices with the same Laplacian energy.
\end{Rem}

\begin{Example} To conclude this section, we use the construction in the proof of Theorem~\ref{cor2} to obtain $\ell=4$ noncospectral unicyclic graphs with the same Laplacian energy. We are able to build graphs with $n=40+2\gamma$ vertices, for any integer $\gamma \geq 1$. For $\gamma=2$, we may insert the $2(\gamma-1)=2$ additional vertices as an extra path $P_2$ adjacent to $u$. The family is depicted in Figure \ref{figexeqlapl} and their Laplacian energy is approximated by $LE=60.70698$.
\begin{figure}[h!]
       \begin{minipage}[c]{0.19 \linewidth}
           \fbox{\includegraphics[width=\linewidth]{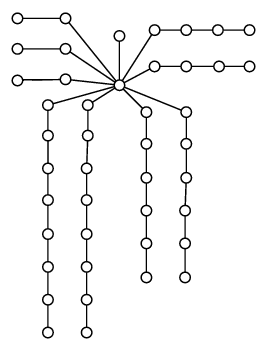}}\\
       \end{minipage}
       \begin{minipage}[c]{0.19 \linewidth}
           \fbox{\includegraphics[width=\linewidth]{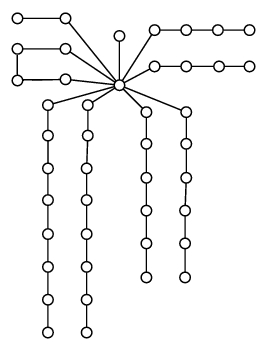}}\\
       \end{minipage}
       \begin{minipage}[c]{0.19 \linewidth}
           \fbox{\includegraphics[width=\linewidth]{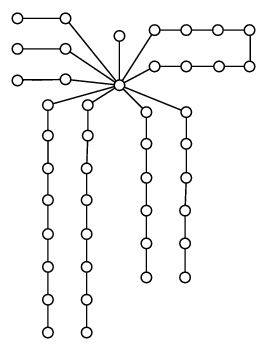}}\\
       \end{minipage}
       \begin{minipage}[c]{0.19 \linewidth}
           \fbox{\includegraphics[width=\linewidth]{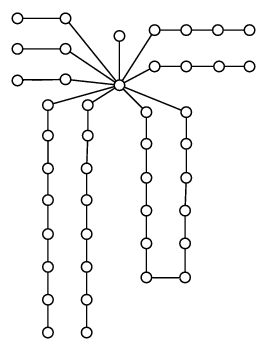}}\\
       \end{minipage}
       \begin{minipage}[c]{0.19 \linewidth}
           \fbox{\includegraphics[width=\linewidth]{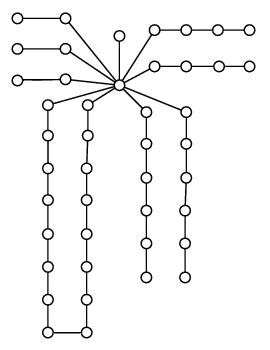}}\\
       \end{minipage}
       \caption{$G \in \bigcap_{i=1}^4 \mathcal{S}_{44,2i}$ and the four equienergetic graphs obtained from it.}
       \label{figexeqlapl}
\end{figure}
\end{Example}

\section{Additional proofs}\label{additional}

In this section, we establish two technical results that were useful in our proofs. In Section~\ref{energy} it was necessary to compute, for a graph $G \in \mathcal{S}_{n,k}$, the number $\sigma^G$ of Laplacian eigenvalues that are larger than or equal to the average degree of $G$. Indeed, we relied on Lemma~\ref{lemasigmag}, which states that $G$ and $\mathcal{E}_{e_1}(G)$ have the same number of such eigenvalues, namely $n/2$. We shall now prove this result.

To this end, we use an algorithm due to Jacobs and Trevisan \cite{JT11}, which was originally stated in terms of the adjacency matrix, but may be readily adapted to the Laplacian matrix (see~\cite{FHRT11} for details). It enables us to determine the number of
(Laplacian) eigenvalues of a tree that are larger than $\alpha$, equal to
$\alpha$ and smaller than $\alpha$, where $\alpha$ is an arbitrary real number.
For the sake of completeness we give a brief description of the algorithm.
An arbitrary vertex is chosen as the root of an $n$-vertex input tree, and the vertices are labeled $1$ to $n$, bottom up with respect to the root (i.e., each vertex has a higher label than its children). The algorithm is initialized by assigning the value $a(v_i)=d(v_i)-\alpha$ to each vertex $v_i$, where $d(v_i)$ is the degree of $v_i$. Then the
vertices are processed one by one, according to the order given by the labeling: leaves are left
unchanged, while, for each (nonleaf) vertex, the algorithm assigns a new value $a(v_i)\leftarrow
a(v_i) - \sum_{v\in C_i}\frac{1}{a(v)}$, where $C_i$ is the set of children of
$v_i$, provided that $0\notin\{a(v):v\in C_i\}$. If $0\in\{a(v):v\in C_i\}$ the algorithm chooses a vertex $v_k$ in $C_i$ such that
$a(v_k)=0$ and performs the following steps: $a(v_i)\leftarrow-\frac{1}{2}$, $a(v_k)\leftarrow2$ and, if $v_i$ is not the root,
the algorithm suppresses the edge between $v_i$ and its parent. At the end of the process, the
number of occurrences of positive, negative and zero values $a(v)$ corresponds to the number of Laplacian eigenvalues that are larger than, smaller than and equal to $\alpha$, respectively.
\begin{proof}[Proof of Lemma~\ref{lemasigmag}]
Let $G\in\mathcal{S}_{n,k}$. We wish to show that $\sigma^{G}=\sigma^{\mathcal{E}_{e_1}(G)}=\frac{n}{2}$.

We apply the above algorithm on $G$, rooted at $u$, with $\alpha=2$.
In the beginning, the leaves are assigned $a(v)=-1$, the central vertex $u$, adjacent to
$r \geq 3$ branches, is assigned $a(u)=r-2$ and all the other vertices are assigned $a(v)=0$.
The algorithm processes the vertices one by one, from the leaves towards $u$, so that, for all branches, we obtain the values $a(v)=-1$ for vertices in odd positions, $a(v)=1$ for vertices in even positions, and $a(u)=r-2-(r-1)+1=0$.
Therefore, the number of Laplacian eigenvalues larger than or equal to $\alpha=2$
in $G$ is $\frac{n}{2}$.  It follows from the proof of Theorem~\ref{teoenlapl} that $k/2$ of these eigenvalues lie in $D_k$, and that $k/2$ eigenvalues in $F_k$ are larger than or equal to 2. Since $Lspect(G')=(Lspect(G)\setminus D_k)\cup F_k$, we have $\sigma^{G'}=\frac{n}{2}$.

To determine $\sigma^{G}$, we apply the same algorithm to $G$ with $\alpha=\overline{d}^{G}=2-\frac{2}{n}$. Upon initialization, each leaf is assigned $a(v)=-1+\frac{2}{n}$, the vertices of degree two receive
$a(v)=\frac{2}{n}$, and $a(u)=r-2+\frac{2}{n}$. We consider the performance of the algorithm on each branch $P_{a_i}$ of $G$. More precisely, we shall prove that the number of positive entries on $P_{a_i}$ at the end of the algorithm is at most $\lfloor a_i/2 \rfloor$. Observe that this leads to our result: indeed, it implies that the number of positive entries over all vertices of $G$ other than $u$ is at most $(n-2)/2$ (recall that $n-1$ is odd). The root $u$ may still contribute with an additional positive entry, which leads to $\sigma^G\leq\frac{n}{2}$. On the other hand, we already know that $\frac{n}{2}$ Laplacian eigenvalues of $G$ are larger than or equal to 2, so that $\sigma^G=\frac{n}{2}$.

We now prove our claim. As an auxiliary result, we use the fact that there are precisely $\lfloor\frac{t}{2}\rfloor$ Laplacian eigenvalues of a path $P_t$ that are larger than or equal to the average $\overline{d}^{P_t}=2-2/t$. This is well known and may be derived directly from the Laplacian spectrum of a path (which may be found in~\cite{Brobook}, for instance).

First assume that $a_i$ is even. Consider an application of the algorithm to the graph $G^{\star}=P_{a_i+1}$ (rooted at one of the leaves) with $\alpha^{\star}=2-2/(a_i+1)$. The auxiliary result tells us that exactly $a_i/2$ entries will be nonnegative. Observe that, if the algorithm were applied to $G$ with the same value $\alpha^{\star}$, the outcome would be exactly the same on the branch $P_{a_i}$. Since $a_i+1<n$ (and hence $\alpha>\alpha^{\star}$), the number of nonnegative entries cannot increase if we replace $\alpha^{\star}$ by $\alpha$, which leads to the upper bound $a_i/2$.

For $a_h$ odd, consider an application of the algorithm to the graph $G^{\star \star}=P_{2a_h+1}$ (rooted at the central vertex $w$) with $\alpha^{\star \star}=2-2/(2a_h+1)$. The auxiliary result tells us that exactly $a_h$ entries are nonnegative in the end. We also know that, by symmetry, the number of nonnegative entries on each component of $G^{\star \star}-w$ must be the same, and hence is equal to $\lfloor a_h/2 \rfloor$. This implies that the number of nonnegative entries on $P_{a_h}$ when the algorithm is applied to $G$ (rooted at $u$) with $\alpha^{\star \star}$ is $\lfloor a_h/2 \rfloor$. We reach the desired conclusion by using the hypothesis $a_i<n/2$, which implies that $\alpha=2-2/n \geq  \alpha^{\star \star}$. Therefore the number of nonnegative entries on $P_{a_h}$ when the algorithm is applied to $G$ with $\alpha=2-2/n$ is bounded above by $\lfloor a_h/2 \rfloor$, as required.
\end{proof}

Furthermore, the following useful trigonometric identity has been applied in our proof of Theorem~\ref{teoenlapl}.  Although it can also be proved with trigonometric arguments, we provide a short proof which relies on spectral graph theory.

\begin{Lema}\label{lemasumcos} For any positive integer $k$, we have $$\sum_{j=1}^{k} \cos{\frac{2j\pi}{2k+1}} = -\frac{1}{2}.$$
\end{Lema}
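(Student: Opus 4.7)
The plan is to use the identity ``trace equals sum of eigenvalues'' applied to a matrix whose spectrum we have already expressed in terms of these very cosines. In the proof of Proposition~\ref{propdifespectSk}, it was shown via Lemma~\ref{lemaAs} that the matrix $H = L(P_k) + E_{e_k}$ has spectrum exactly $D_k = \{2 + 2\cos(2j\pi/(2k+1)) : j = 1, \ldots, k\}$. This matrix will do all the work.

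I would proceed in two short steps. First, compute $\mathrm{tr}(H)$ directly from its definition: the trace of $L(P_k)$ equals the sum of the vertex degrees of $P_k$, which is $2|E(P_k)| = 2(k-1)$, and $E_{e_k} = e_k e_k^T$ contributes a single additional $1$ on the diagonal, giving $\mathrm{tr}(H) = 2k - 1$. Second, equate this with the sum of eigenvalues,
\[
\sum_{j=1}^k \Bigl(2 + 2\cos\tfrac{2j\pi}{2k+1}\Bigr) = 2k + 2\sum_{j=1}^k \cos\tfrac{2j\pi}{2k+1},
\]
and solve for the cosine sum to obtain $-\tfrac{1}{2}$.

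The nontrivial spectral input is the diagonalization of the tridiagonal matrix $A_k$ provided by Lemma~\ref{lemaAs}, which is what encodes the roots-of-unity structure on the algebraic side and makes the argument ``spectral'' rather than purely trigonometric. Given that lemma, no obstacle remains; the whole argument is essentially a one-line trace computation.
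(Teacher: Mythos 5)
Your proof is correct, and it takes a genuinely different (and leaner) route than the paper's. The paper also gives a spectral argument, but a more roundabout one: it takes a graph $G\in\mathcal{S}_{n,k}^{\ast}$ and its image $G'=\mathcal{E}_{e_1}(G)$, uses the identity ``sum of Laplacian eigenvalues equals twice the number of edges'' on both $n\times n$ Laplacians, and then invokes Proposition~\ref{propdifespectSk} to cancel the common part $Lspect(G)\setminus D_k=Lspect(G')\setminus F_k$, leaving $\sum F_k-\sum D_k=-4\sum_j\cos\frac{2j\pi}{2k+1}=2$. You instead apply the trace identity directly to the $k\times k$ matrix $H=L(P_k)+E_{e_k}$, whose spectrum is $D_k$ by Lemma~\ref{lemaAs}; your computation $\mathrm{tr}(H)=2(k-1)+1=2k-1=2k+2\sum_j\cos\frac{2j\pi}{2k+1}$ is right. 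The two proofs share the same essential input (the diagonalization of the tridiagonal matrix $A_k$), but yours needs only $D_k$ and avoids $F_k$, the graph operation, and Proposition~\ref{propdifespectSk} entirely, so it is shorter and more self-contained; the paper's version has the mild charm of exhibiting the identity as a consequence of adding one edge to a graph, which fits the narrative of the section but buys nothing logically.
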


\begin{proof} Let $G$ be a graph in $\mathcal{S}_{n,k}^{\ast}$ where the root $u$ is incident to two copies of $P_{k}$. It is well known that the sum of the Laplacian eigenvalues of a graph is twice the number of edges. Since $\mathcal{E}_{e_1}$ adds a single edge to $G$, the difference between the sum of the Laplacian eigenvalues of $G'=\mathcal{E}_{e_1}(G)$ and the sum of the Laplacian eigenvalues of $G$ is 2. This leads to
\begin{eqnarray*}
2&=&\sum_{i=1}^n \mu_i^{G'} - \sum_{i=1}^n \mu_i^{G}\\
&=&  \sum_{\mu_i^{G'} \in F_k} \mu_i^{G'} + \sum_{\mu_i^{G'}\in Lspect(G')\setminus F_k} \mu_i^{G'} - \sum_{\mu_i^G \in D_k}  \mu_i^{G}- \sum_{\mu_i^G\in Lspect(G)\setminus D_k} \mu_i^{G} \\
&=& \sum_{i=1}^{k}\left(2-2\cos{\frac{2i\pi}{2k+1}}\right) - \sum_{i=1}^{k}\left(2+2\cos{\frac{2i\pi}{2k+1}}\right) + 0\\
&=&  - 4\sum_{i=1}^k \cos{\frac{2i\pi}{2k+1}}
\end{eqnarray*}
and the result follows. Here we used Proposition \ref{propdifespectSk} which ensures that \\$Lspect(G)\setminus D_k=Lspect(G')\setminus F_k$.
\end{proof}

\bibliographystyle{acm}

\vspace{0.5cm}

\end{document}